\newcommand{\tpdf}{\texorpdfstring}
\theoremstyle{plain}
\newtheorem{thm}{\bfseries Theorem}[section]
\newtheorem{prop}[thm]{\bfseries Proposition}
\newtheorem{cor}[thm]{\bfseries Corollary}
\newtheorem{df}[thm]{\bfseries Definition}
\theoremstyle{remark}
\DeclareMathSymbol{\Z}{\mathalpha}{AMSb}{"5A} 
\DeclareMathSymbol{\PP}{\mathalpha}{AMSb}{"50} 
\DeclareMathSymbol{\Q}{\mathalpha}{AMSb}{"51}
\DeclareMathSymbol{\N}{\mathalpha}{AMSb}{"4E}
\DeclareMathSymbol{\R}{\mathalpha}{AMSb}{"52}
\def\C{\mathbb{C}}
\newcommand{\longto}{\longrightarrow}
\definecolor{mongris}{gray}{0.9}
\renewcommand{\bf}{\bfseries}
\DeclareMathOperator{\Vor}{Vor}
\def \St{{S\hskip-1.5pt t}}
\newcommand{\GL}{\mathrm{GL}}
\newcommand{\Unitary}{\mathrm {U}}
\newcommand{\Stab}{\mathrm{Stab}}
\def \tr {\textcolor{black}}
\def \tg {\textcolor{black}}
\newif\ifshowvc
\definecolor{mongris}{rgb}{0.9, 0.9, .9}
\renewcommand{\geq}{\geqslant}
\renewcommand{\leq}{\leqslant}
\newcommand{\cS}{\mathcal{S}}
\begin{document}

\author[M. D. Sikiri\'c]{Mathieu Dutour Sikiri\'c}
\address{Mathieu Dutour Sikiri\'c, Rudjer Boskovi\'c Institute, Bijenicka 54, 10000 Zagreb, Croatia}
\email{mathieu.dutour@gmail.com}

\author[H. Gangl]{Herbert Gangl}
\address{Department of Mathematical Sciences, South Road, Durham DH1 3LE, United Kingdom}
\email{herbert.gangl@durham.ac.uk}

\author[P. E. Gunnells]{Paul E. Gunnells}
\address{P. E. Gunnells, Department of Mathematics and Statistics, LGRT 1115L, University of Massachusetts, Amherst, MA 01003, USA}
\email{gunnells@math.umass.edu}

\author[J. Hanke]{Jonathan Hanke}
\address{J. Hanke, Princeton, NJ 08542, USA}
\email{jonhanke@gmail.com}
\urladdr{\url{http://www.jonhanke.com}}

\author[A. Sch\"urmann]{Achill Sch\"urmann}
\address{A. Sch\"urmann, Universit\"at Rostock, Institute of Mathematics, 18051 Rostock, Germany}
\email{achill.schuermann@uni-rostock.de}

\author[D. Yasaki]{Dan Yasaki}
\address{D. Yasaki, Department of Mathematics and Statistics, University of North Carolina at Greensboro, Greensboro, NC 27412, USA}
\email{d\_yasaki@uncg.edu}

\thanks{MDS was partially supported by the Humboldt Foundation.
PG was partially
supported by the NSF under contract DMS 1101640 and DMS 1501832.
The authors thank the American Institute of Mathematics where this research was initiated.
}

\keywords{Cohomology of arithmetic groups, Voronoi
reduction theory, linear groups over imaginary quadratic fields,
$K$-theory of number rings}

\subjclass[2010]{Primary 19D50; Secondary 11F75}

\title{On the topological computation of \tpdf{$K_4$}{K4} of the
Gaussian and Eisenstein integers}

\begin{abstract}
In this paper we use topological tools to investigate the structure of
the algebraic $K$-groups $K_{4} (\Z[i])$ and $K_{4} (\Z[\rho])$, where
$i := \sqrt{-1}$ and $\rho := (1+\sqrt{-3})/2$.  We exploit the close
connection between homology groups of $\GL_n(R)$ for $n\leq 5$ and
those of related classifying spaces, then compute the former using
Voronoi's reduction theory of positive definite quadratic and
Hermitian forms to produce a very large finite cell complex on which
$\GL_n(R)$ acts.  Our main result is that $K_{4} (\Z[i])$ and $K_{4}
(\Z[\rho])$ have no $p$-torsion for $p\geq 5$.  
\end{abstract}

\maketitle

\section{Introduction} \subsection{Statement of results} Let $R$ be
the ring of integers of a number field $F$.  Only very few cases are
known where the algebraic $K$-group $K_4(R)$ has been explicitly
computed, the first such $K_4(\Z)$ having been determined as recently
as 2000 by Rognes \cite{RognesK4}, building on work of Soul\'e
\cite{soule-3torsion}.  The goal of this paper is the explicit
topological computation of the torsion (away from $2$ and $3$) in the
groups $K_{4} (R)$ for $R$ one of two special imaginary quadratic
examples: the \emph{Gaussian integers} $\Z[i]$ and the
\emph{Eisenstein integers} $\Z[\rho]$, where $i := \sqrt{-1}$ and
$\rho := (1+\sqrt{-3})/2$.  Our work is in the spirit of Lee--Szczarba
\cite{LS_Hom_Cohom, LS4, LS3_fortyeight}, Soul\'e \cite{SouleSL3}, and
Elbaz-Vincent--Gangl--Soul{\'e} \cite{EGS_quelques, PerfFormModGrp}
who treated $K_N(\Z)$ for small $N$, and Staffeldt \cite{k3gauss} who
investigated $K_{3}(\Z[i])$.  As in these works, the first step is to
compute the cohomology of $\GL_n( R)$ for $n\leq N+1$; information
from this computation is then assembled into information about the
$K$-groups following the program in \S \ref{ss:quillen}.  Using these
computations we show the following (Theorem \ref{boundedprimes}):

\medskip
\noindent{\bf \tr{Theorem.}} {\em
The orders of the groups \ $K_{4} \big(\Z[i]\big)$ \ 
and \ $K_{4} \big(\Z [\rho]\big)$ 
are not divisible by any primes $p\geq 5$.}

\medskip
We remark that this result is not new; in fact, Kolster's work
\cite{Kolster1} implies the stronger result that $K_{4}
\big(\Z[i]\big)$ and $K_{4} \big(\Z [\rho]\big)$ vanish.  Indeed, if
$R$ is the ring of integers of a $CM$ field, then Kolster proved that,
assuming the Quillen--Lichtenbaum conjecture, the orders of the groups
$K_{4n} (R)$, $n=1,2,3,\dotsc$, can be computed in terms of special
values of certain $L$-functions.  This deep connection between
$K$-groups and special values of $L$-functions is now a theorem,
thanks to the celebrated work by Voevodsky \cite{vvbk} and Rost, as
put into context in \cite{wbook}.

Our work, on the other hand, treats $K_{4} \big(\Z[i]\big)$ and $K_{4}
\big(\Z [\rho]\big)$ by completely different methods.  We only use the
definition of the $K$-groups and explicit results about the cohomology
of the relevant arithmetic groups \cite{AIM1}, together with
Arlettaz's bounds on the kernel of the Hurewicz homomorphism
\cite{ArlettazJPAA71}, to prove Theorem \ref{boundedprimes}.  This
also explains why our calculations do not allow us to say anything for
the primes 2 and 3: both the results of \cite{AIM1} and the
injectivity of the Hurewicz map in our cases only hold away from these
primes.

\subsection{Outline of method}\label{ss:quillen} In the rest of this
introduction we outline the main steps of our argument.  These follow
the classical approach for computing algebraic $K$-groups of number
rings due to Quillen \cite{QuillenFiniteGen}, which shifts the focus
to computing the homology (with nontrivial coefficients) of certain
arithmetic groups.

\begin{enumerate}
\item[(i)] {\bf (Definition)} By definition the algebraic $K$-group
$K_N( R)$ of a ring $R$ is a particular homotopy group of a
topological space associated to $R$: we have $K_N( R)=\pi_{N+1}(BQ(
R))$, where $BQ( R)$ is a certain classifying space attached to the
infinite general linear group $\GL( R)$.  In particular $BQ (R)$ is
the classifying space of the category $Q( R)$ of finitely generated
$R$-modules.  This is known as Quillen's \emph{$Q$-construction} of
algebraic $K$-theory \cite{QuillenHigherK}.
\item[(ii)] {\bf (Homotopy to Homology)} The Hurewicz homomorphism
$\pi_{N+1}(BQ( R))\to H_{N+1}(BQ( R))$ allows one to replace the
homotopy group by a homology group without losing too much
information; more precisely, what may get lost is information about
small torsion primes appearing in its finite kernel.
\item[(iii)] {\bf (Stability)} By a stability result of Quillen
\cite[p.~198]{QuillenFiniteGen} one can pass from $Q(R)$ to the
category $Q_{\tg{M+1}}( R)$ of finitely generated $R$-modules of rank $\leq
\tg{M}+1$ for sufficiently large $\tg{M}$. This amounts to passing from $\GL(
R)$ to the finite-dimensional general linear group $\GL_{\tg{M}+1}( R)$.
\tg{In the cases at hand, a result of Lee and Szczarba allows to reduce
to the case $M=N$.
}
\item[(iv)] {\bf (Sandwiching)} The homology groups to be determined are
then $H_*(BQ_n(R))$ for $n\leq  N+1$.
Rather than \tr{computing} these directly, one uses the fact that they can be
sandwiched between homology groups of
$\GL_n(R)$, where the homology is taken with
(nontrivial) coefficients in the Steinberg module $\St_n$ associated
to $\GL_n(R)$.
\item[(v)] {\bf (\tg{Equivariant} homology)} 
\tg{It has been shown for certain number rings $R$ that the homology groups $H_m(\GL_{n}(R), \St_{n})$ 
are isomorphic to the equivariant $\GL_{n}(R)$-homology
of an associated pair (denoted $(X_n^*,\partial X_n^*)$ in  \S\ref{outline} below). 
The standard method to compute the latter uses {\em Voro\-noi complexes}.  }
These are \tg{relative} chain complexes of
certain explicit polyhedral reduction domains of a space of positive
definite quadratic or Hermitian forms of a given rank, depending
respectively on whether $R = \Z$ or $R$ is imaginary quadratic.  
%
\item[(vi)] {\bf (Vanishing Results)} 
There are various techniques to show vanishing of homology groups.  
As a starting point one has vanishing results 
for $H_n(BQ_1)$ as in Theorem \ref{cornerstone} below, 
and for $H_0(GL_n,\St_n)$ \tr{as in Lee--Szczarba \cite{LS_Hom_Cohom}, Cor.~to
Thm 4.1.} 

\end{enumerate}
For a given $N$, using (ii) and knowing the results of (iv)--(vi) for
all $0 \leq n\leq N+1$ is often enough to give a bound $p \leq B$ on
the primes $p$ dividing the order of the torsion subgroup $K_{N,
\mathrm{tors}}(R)$ of $K_{N}(R)$.  

\subsection{Outline of paper}\label{outline} In this paper the sections work
backwards through the method outlined in \S\ref{ss:quillen} to
determine the structure of $K_4(\Z[i])$ and $K_4(\Z[\rho])$.  In
\S\ref{s:homology}, we describe the computation of the 
\tg{equivariant homology in question and relate it to the Steinberg homology.}
In
\S\ref{s:vanishingandsandwiching} we use the \tg{results on Steinberg} homology and some
vanishing results to determine the groups $H_m(BQ_n({}R))$ (i.e.,~step
(iv) above).  A key role here is played by Quillen's stability result
(iii) for $BQ_n$, \tr{as refined by Lee-Szczarba in \cite{LS_Hom_Cohom}}, which serves as a stopping criterion.  Finally, in
\S \ref{s:hurewicz} we work out the potential primes entering the
kernel of the Hurewicz homomorphism (i.e.,~step (ii) above), which
gives Theorem \tr{\ref{boundedprimes}}. 

\section{Homology of Voronoi complexes}\label{s:homology}

We first collect the results from \cite{AIM1} concerning the Voronoi
complexes attached to $\Gamma=\GL_n(\Z[i])$ or $\Gamma=\GL_n(\Z[\rho
])$; this is the necessary information needed for step (v) from \S
\ref{ss:quillen} above.  More details about these computations,
including background about how the computations are performed, can be
found in \cite{AIM1}.

\medskip

Let $F$ be an imaginary quadratic field with ring of integers $R$, and
let $X_{n} := \GL_{n} (\C) / \Unitary (n)$ be the symmetric space of
$\GL_{n} (F \otimes_{\Q} \R)$.  The space $X_{n}$ can be realized as
the quotient of the cone of rank $n$ positive definite Hermitian
matrices $C_{n}$ modulo homotheties (i.e. non-zero scalar
multiplication), and a partial Satake compactification $X^{*}_{n}$ of
$X_{n}$ is given by adjoining boundary components to $X_{n}$ given by
the cones of positive semi-definite Hermitian forms with an
$F$-rational nullspace (again taken up to homotheties).  We let
$\partial X_n^* := X_{n}^{*}\smallsetminus X_{n}$ denote the
\emph{boundary} of $X_{n}^*$.  Then $\Gamma := \GL_n(R)$ acts by left
multiplication on both $X_n$ and $X_n^*$, and the quotient $\Gamma
\backslash X^{*}_{n}$ is a compact Hausdorff space.

A generalization---due to Ash \cite[Chapter II]{amrt} and Koecher
\cite{koecher}---of the polyhedral reduction theory of Voronoi
\cite{Voronoii} yields a $\Gamma$-equivariant explicit decomposition
of $X_{n}^{*}$ into (Voronoi) cells.  Moreover, there are only
finitely many cells modulo $\Gamma$ \tg{and we have the following result.}

\begin{prop}  [{\cite[Proposition 3.6]{AIM1}}]
\label{thm:voriso}\tr {For $\Gamma\in \{\GL_n(\Z[i]), \GL_n(\Z[\rho])\}$ and $m\in \Z$ 
we have} $\tg{H_{m}^\Gamma (X_n^*,\partial X_n^*,\Z) \simeq H_{m-n+1} (\Gamma, \St_{n})}$.
\end{prop}

Let
$\Sigma_d^*:=\Sigma_d(\Gamma)^*$ be a set of representatives of the
$\Gamma$-inequivalent $d$-dimensional Voronoi cells that meet the
interior $X_{n}$, and let $\Sigma_d:=\Sigma_d(\Gamma)$ be the subset of
representatives of the $\Gamma$-inequivalent {\em orientable} cells in
this dimension; here we call a cell {\em orientable} if all the
elements in its stabilizer group preserve its orientation.
\tr{Note that in our consideration the prime~2 will always be inverted.
This entails that only orientable cells can contribute to the homology.}
One can form a chain complex $\Vor_{*}$, the
\emph{Voronoi complex}, and one can prove that modulo small primes the
homology of this complex is the homology $H_{*} (\Gamma , \St_n)$,
where $\St_n$ is the rank $n$ \emph{Steinberg module}
(cf.~\cite[p.~437]{Borel-Serre}).
To keep track of these small primes explicitly, we make the following definition.

\begin{df}[Serre class of small prime power groups]\label{def:serreclass} Given $k \in \N$,
we let $\cS_{p \leq k}$ denote the \tr{\bf Serre class} of finite abelian
groups $G$ whose cardinality $|G|$ has all of its prime divisors $p$
satisfying $p \leq k$.

For any finitely generated abelian group $G$, there is a unique
maximal subgroup $G_{p \leq k}$ of $G$ in the Serre class $\cS_{p \leq
k}$.  We say that two finitely generated abelian groups $G$ and $G'$
are {\bf equivalent modulo $\cS_{p\leq k}$} and write $G \simeq_{{/p
\leq k}} G'$ if the quotients $G/G_{p \leq k} \cong G'/G'_{p \leq k}$
are isomorphic.
\end{df}
\tr{We call the {\bf torsion primes} of a group $G$ those prime numbers $p$ which divide the order of at least one of the finite subgroups of $G$.}


\subsection{Voronoi data for \tpdf{$R=\Z[i]$}{R = Z[i]}} We now give
results for the Voronoi complexes and \tg{the equivariant homology of the pairs
$(X_n^*,\partial X_n^*)$} in the cases
relevant to our paper \tg{($n=2,3,4$)}.  This subsection treats the Gaussian integers;
in \S \ref{ss:voronoieisenstein} we treat the Eisenstein integers.


\begin{prop}[{\cite{k3gauss}}]
\leavevmode
\begin{enumerate}
\item
There is one $d$-dimensional Voronoi cell for $\GL_2(\Z[i])$ for each $1 \leq d \leq 3$, 
and only the 3-dimensional cell is orientable.

\item
The number of $d$-dimensional Voronoi cells for $\GL_3(\Z[i])$ is given by:
\[
\begin{array}{|c|ccccccc|}
\hline
d & 2& 3& 4&  5&6&7&8 \\
\hline
|\Sigma_d(\GL_3(\Z[i]))^*| &   2& 3& 4& 5& 3& 1& 1 \\
\hline
|\Sigma_d(\GL_3(\Z[i]))| &   0& 0& 1& 4& 3& 0& 1
\\\hline
\end{array}
\]
\end{enumerate}
\end{prop}

\phantom{.}
\medskip
\begin{prop}[{\cite[Table 12]{AIM1}}]
The number of $d$-dimensional Voronoi cells for $\GL_4(\Z[i])$ is given by:
\[
\begin{array}{|c|ccccccccccccc|}
\hline
d &  3& 4&  5&6&7&8 &9 &10&11&12&13&14&15 \\
\hline
|\Sigma_d(\GL_4(\Z[i]))^*| &    4& 10& 33& 98& 258& 501& 704& 628& 369& 130& 31& 7& 2 \\
\hline
|\Sigma_d(\GL_4(\Z[i]))| &  0& 0& 5& 48& 189& 435& 639& 597& 346& 120& 22& 2& 2\\ 
\hline
\end{array}
\]
\end{prop}

We remark that for $\GL_3(\Z[i])$ the Voronoi complexes and their
homology ranks were originally computed by Staffeldt \cite{k3gauss},
who even distilled the 3-part for each homology group.  After
calculating the differentials for this complex one obtains the following homology groups,
in agreement with Staffeldt's results:

\begin{prop}[{\cite[Theorems IV, 1.3 and 1.4, p.785]{k3gauss}}]
\mbox{}
\begin{equation} \label{homGL2i}
H_m(\GL_2(\Z[i]),\St_2) \simeq_{/p\leq 3} \begin{cases}\ \Z &\text{if } m=2,\\ \ 0&\text{otherwise},\end{cases}
\end{equation}
\begin{equation} \label{homGL3i}
H_m(\GL_3(\Z[i]),\St_3) \simeq_{/p\leq 3} \begin{cases}\ \Z &\text{if } m=2,3,6,\\ \ 0&\text{otherwise}.\end{cases}
\end{equation}
\end{prop}
\noindent
In particular, from the above theorem we deduce that the only possible
torsion primes for $\,H_m(\GL_n(\Z[i]),\St_n)\,$ for $n=2,3$ are
the primes 2 and 3.

\medskip 

While the Voronoi homology of $\GL_4(\Z[i])$ has been determined in all degrees in 
{\cite[Theorem 7.2]{AIM1}}, we will only need the following \tg{two} special cases.

\tr{\begin{prop}[{\cite[Theorem 7.2]{AIM1}}] \label{HmGL4} For $m=1,2$ we have
\begin{equation} \label{HmGL4results}
H_m(\GL_4(\Z[i]),\St_4) 
\simeq_{/p\leq 5} \{0\}\,.
\end{equation}
\end{prop}
}

\noindent
The last column of \cite[Table 12]{AIM1} further shows that 
the elementary divisors of all the differentials in
the Voronoi complex for $\GL_4(\Z[i])$ in small degree (in fact for degree $\leq 13$) 
are supported on primes $\leq 3$. 



\smallskip 
\tr{We want to show the stronger result that $H_1(\GL_4(\Z[i]),\St_4)\simeq_{/p\leq 3} \{0\}$, \tg{i.e.~we
want to show that the prime~5 cannot occur.}
For this we will need to use spectral sequences.}
\tr{According}\footnote{\tr{More precisely [5, VII.7] constructs a
spectral sequence converging to the equivariant homology $H^G_*(X, M)$
of a $G$-complex $X$ with coefficients in a $G$-module $M$; the $E^1$
page has a form similar to \eqref{eq:E1}.  One can formulate an analogous
spectral sequence for the equivariant homology of a pair $(X,Y)$ of
$G$-complexes with $E^1$ page \eqref{eq:E1}, cf.~the remarks in [5, VII.7] in
the paragraphs preceding equation (7.1).}} to \cite[VII.7]{Brown},
there is a spectral sequence $E_{d,q}^r$ converging to the equivariant
homology groups $H_{d+q}^{\Gamma} (X_n^* , \partial X_n^* ; {\mathbb
Z})$ of the homology pair $(X_n^* , \partial X_n^*)$, and such that
\begin{equation}\label{eq:E1}
E_{d,q}^1 = \bigoplus_{\sigma \in \Sigma_d^*} H_q (\Gamma_{\sigma} , {\mathbb Z}_{\sigma}) ,
\end{equation}
where ${\mathbb Z}_{\sigma}$ is the orientation module of the cell
$\sigma$ \tg{and $\Gamma_\sigma$ the stabilizer of the cell $\sigma$.
In the remainder of this section we put $n=4$ and consider $(X_4^*,\partial X_4^*)$}.

\begin{prop} \label{H1GL4a} Let $\Gamma=\GL_4(\Z[i])$ and $E_{d,q}^1 $ as above.\\
(i) For each $d=0,\dots,4$ one has 
$ \ E_{d,4-d}^1 
\simeq_{/p\leq 3} \{0\}$. \\ 
(ii) Similarly, for each $d=0,\dots,5$ one has 
$ \ E_{d,5-d}^1 
\simeq_{/p\leq 3} \{0\}$. 
\end{prop}
\noindent{\em Proof.} \tr{We use the data obtained in \cite[Table 12]{AIM1}, available at \cite{dany}.}\\
(i) 1. As there are no cells in $\Sigma_d^*$ for $d\leq 2$, we have
$E_{0,4}^1=E_{1,3}^1=E_{2,2}^1=0$.\\
2. Consider now $d=3$.  \tr{The stabilizer of each of the four cells in $\Sigma_3^*$ lies in $\cS_{p\leq 3}$.}
Thus in particular we have
\[
E_{3,1}^1 = \bigoplus_{\sigma\in \Sigma_3^*} H_1(\Stab_\sigma,\Z_\sigma) \in \cS_{p\leq 3},
\]
where $\cS_{p\leq 3}$ is as in Definition \ref{def:serreclass}.\\
\tr{3. For $d=4$, 
we note that none of the ten cells in $\Sigma_4^*$ has its orientation preserved under the action of
its stabilizer, so $E_{4,0}^1 = 0 \text{ mod }{\cS_2}$.} \\
(ii) 1. As there are no cells in $\Sigma_d^*$ for $d\leq 2$, we have
$E_{0,5}^1=E_{1,4}^1=E_{2,3}^1=0$.\\
2. Consider now $d=3$ and $d=5$.  \tr{The stabilizer of each of the four cells in $\Sigma_3^*$ and each of the 
33 cells in $\Sigma_5^*$  lies in $\cS_{p\leq 3}$.}
Thus in particular we have
\[
E_{3,2}^1  \in \cS_{p\leq 3},\qquad E_{5,0}^1 \in \cS_{p\leq 3}\,.
\]
\tr{3. Finally, for $d=4$, 
 there is only one cell (out of ten) in
$\Sigma_4^*$, denoted by $\sigma_4^1$, that contains a subgroup of
order~5.  We must therefore show that there is no 5-torsion in
$\tr{H_1}(\Stab(\sigma_4^1),\tilde\Z)$  (where $\tilde\Z$ is the orientation module 
$\Z_{\sigma_4^1}$).
 Indeed, the
subgroup $K_1$ of $\Stab(\sigma_4^1)$ preserving the orientation of $\sigma_4^1$ 
is isomorphic to $\Z /4\Z \times A_5$, 
where $A_{5}$ is the alternating group on five
letters, with abelianization
$\tr{H_1}(\Stab(\sigma_4^1),\tilde\Z)\simeq \tr{H_1}(K_1,\Z)$ 
$\simeq  \Z /4\Z $ (for the first equality, which  holds mod $\cS_2$, we make use of Lemmas 8.2 and 8.3 in \cite{PerfFormModGrp})
lies in $\cS_{p\leq 3}$.  
Thus there can be no $5$-torsion from here, which completes the proof. 
\qquad $\Box$}
\begin{cor} For $\Gamma=\GL_4(\Z[i])$ 
one has $\ H_1(\Gamma, \St_{\tr{4}}) \ \tr{\simeq} \ \tg{H_4^\Gamma(X_4^*,\partial X_4^*,\Z)}  \simeq_{/p\leq 3} \{0\}$ and
$\ H_2(\Gamma, \St_{\tr{4}}) \ \tr{\simeq} \ \tg{H_5^\Gamma(X_4^*,\partial X_4^*,\Z)}   \simeq_{/p\leq 3} \{0\}$.
\end{cor}


\subsection{Voronoi homology data for \tpdf{$R=\Z[\rho]$}{R = Z[rho]}}\label{ss:voronoieisenstein}

Now we turn to the Eisenstein case.

\begin{prop}[{\cite[Tables 1 and 11]{AIM1}}]
\mbox{}
\begin{enumerate}
\item
There is one $d$-dimensional Voronoi cell for $\GL_2(\Z[\rho])$ for each $1 \leq d \leq 3$, 
and only the 3-dimensional cell is orientable.

\smallskip
\item
The number of $d$-dimensional Voronoi cells for $\GL_3(\Z[\rho])$ is given by:
\[
\begin{array}{|c|ccccccc|}
\hline
d & 2& 3& 4&  5&6&7&8 \\
\hline
|\Sigma_d(\GL_3(\Z[\rho]))^*| &   1& 2& 3& 4& 3& 2& 2 \\
\hline
|\Sigma_d(\GL_3(\Z[\rho]))| &   0& 0& 1& 2& 1& 1& 2
\\\hline
\end{array}
\]

\item
The number of $d$-dimensional Voronoi cells for $\GL_4(\Z[\rho])$ is given by:
\[
\begin{array}{|c|ccccccccccccc|}
\hline
d &  3& 4&  5&6&7&8 &9 &10&11&12&13&14&15 \\
\hline
|\Sigma_d(\GL_4(\Z[\rho]))^*| &    2& 5& 12& 34& 82& 166& 277& 324& 259& 142& 48& 15& 5 \\
\hline
|\Sigma_d(\GL_4(\Z[\rho]))| &  0& 0& 0& 8& 50& 129& 228& 286& 237& 122& 36& 10& 5\\ 
\hline
\end{array}
\]

\end{enumerate}
\end{prop}

\bigskip
After calculating the differentials we find the same results as for the homology of $\Z[i]$ above:

\begin{prop}[{\cite[Theorems 7.1 and 7.2 with Propositions 3.2 and 3.6]{AIM1}}]
\mbox{}
\begin{equation} \label{homGL2rho}
H_m(\GL_2(\Z[\rho]),\St_2) \simeq_{/p\leq 3} 
\begin{cases}\ \Z &\text{if } m=2,\\ \ 0&\text{otherwise},\end{cases}
\end{equation}
\begin{equation} \label{homGL3rho}
H_m(\GL_3(\Z[\rho]),\St_3) \simeq_{/p \leq 3} 
\begin{cases}\ \Z &\text{if } m=2,3,6,\\ 
\ 0&\text{otherwise},\end{cases}
\end{equation}
\item For $m=1,2$ we have
\tr{\begin{equation} \label{homGL4rho}
H_m(\GL_4(\Z[\rho]),\St_4) \simeq_{/p\leq 5}\{0\}\,.
\end{equation}}
\end{prop}

As with $\Z[i]$, a more refined analysis of the $\Gamma=GL_4(\Z[\rho])$ case
shows that $H_m^\Gamma(X_4^*,\partial X_4^*,\Z)$ contains no $5$-torsion for $m=4,5$:

\begin{prop} \label{H1GL4} Let $\Gamma=\GL_4(\Z[\rho])$ and $E_{d,q}^1 $ as above.\\
(i) For each $d=0,\dots,4$ one has 
$ \ E_{d,4-d}^1 
\simeq_{/p\leq 3} \{0\}$. \\ 
(ii) Similarly, for each $d=0,\dots,5$ one has 
$ \ E_{d,5-d}^1 
\simeq_{/p\leq 3} \{0\}$. 
\end{prop}

\tg{
\noindent{\em Proof.} The argument is very similar to that of the proof of Proposition
\ref{H1GL4a}. 
\tr{We use the data obtained in \cite[Table 11]{AIM1}, available at \cite{dany}.}\\
(i) 1. As there are no cells in $\Sigma_d^*$ for $d\leq 2$, we have
$E_{0,4}^1=E_{1,3}^1=E_{2,2}^1=0$.\\
2. For $d=3$, there are two cells in $\Sigma_3^*$, with stabilizer in $\cS_{p\leq 3}$, and hence
\[
E_{3,1}^1 = \bigoplus_{\sigma\in \Sigma_3^*} H_1(\Stab(\sigma),\Z_\sigma) \in \cS_{p \leq 3}.
\]
{3. For $d=4$, 
we note that none of the five cells in $\Sigma_4^*$ has its orientation preserved under the action of
its stabilizer, so $E_{4,0}^1 = 0 \text{ mod }{\cS_2}$.} \\
(ii) 1. As there are no cells in $\Sigma_d^*$ for $d\leq 2$, we have
$E_{0,5}^1=E_{1,4}^1=E_{2,3}^1=0$.\\
2. Consider now $d=3$ and $d=5$.  \tr{The stabilizer of each of the two cells in $\Sigma_3^*$ and each of the 
12 cells in $\Sigma_5^*$  lies in $\cS_{p\leq 3}$.}
Thus in particular we have
\[
E_{3,2}^1  \in \cS_{p\leq 3},\qquad E_{5,0}^1 \in \cS_{p\leq 3}\,.
\]
\tr{3. Finally, for $d=4$, 
 there is only one cell (out of five) in
$\Sigma_4^*$, denoted by $\sigma_4^1$, that contains a subgroup of
order~5.  We must therefore show that there is no 5-torsion in
$\tr{H_1}(\Stab(\sigma_4^1),\tilde\Z)$  (where $\tilde\Z$ is the orientation module 
$\Z_{\sigma_4^1}$). 
 Indeed, the
subgroup $K_1$ of $\Stab(\sigma_4^1)$ preserving the orientation of $\sigma_4^1$ 
is isomorphic to $\Z /6 \Z \times A_5$, 
where $A_{5}$ is the alternating group on five
letters, with abelianization
$\tr{H_1}(\Stab(\sigma_4^1),\tilde\Z)=\tr{H_1}(K_1,\Z)$ 
$\simeq  \Z /6\Z $, which
lies in $\cS_{p\leq 3}$.  
Thus there can be no $5$-torsion from here, which completes the proof. 
\qquad $\Box$}
\begin{cor} For $\Gamma=\GL_4(\Z[\rho])$ 
one has $H_1(\Gamma, \St_{\tr{4}}) \ \tr{\simeq} \ H_4^\Gamma(X_4^*,\partial X_4^*,\Z)  \simeq_{/p\leq 3} \{0\}$ and
$\ H_2(\Gamma, \St_{\tr{4}}) \ \tr{\simeq} \ H_5^\Gamma(X_4^*,\partial X_4^*,\Z)   \simeq_{/p\leq 3} \{0\}$.
\end{cor}
}

\section{Vanishing and sandwiching}\label{s:vanishingandsandwiching} 

In this section, we carry out the sandwiching argument (step (iv) of
\S \ref{ss:quillen}).  As a first step we
invoke a vanishing result for homology groups for $BQ_1$ due to
Quillen \cite[p.212]{QuillenFiniteGen}.  In our cases this result boils down
to the following statement:

\begin{prop} 
\label{cornerstone} For the rings $R=\Z[i]$ and $\Z[\rho]$, we
have $$H_n\big(BQ_1\tr{(R)}\big)=0\qquad \text{whenever\ } n\geq 3\,.$$
\end{prop}
For $R=\Z[i]$ a slightly stronger result is proved in \cite[Lemma I.1.2]{k3gauss}. 
However, we will not need this stronger result for $\Z [i]$, or
its analogue for $\Z [\rho]$.

Using our homology data from \S \ref{s:homology} and Proposition~\ref{cornerstone}, we can get for both rings $R=\Z[i]$ and
$R=\Z[\rho]$ the following result:

\begin{prop} \label{H5BQ} $H_5\big(BQ\tr{(R)}\big) \simeq_{/p\leq 3} \Z$. 
\end{prop}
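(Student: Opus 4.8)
The plan is to compute $H_5(BQ)$ by carrying out the sandwiching argument of step (iv), using the stability result (iii) to reduce to a finite-rank computation and then feeding in the Voronoi homology data collected in \S\ref{s:homology}. First I would invoke Quillen's stability theorem to replace $BQ$ by $BQ_n$ for $n$ large enough that the homology in degree $5$ has stabilized; since we are interested in $H_5$ and the relevant range involves modules of rank $\le N+1$ with $N=4$, the category $BQ_5$ (or at worst $BQ_6$) should suffice. I would then set up the filtration of $BQ_n$ by the subcategories $BQ_m$ of modules of rank $\le m$, which yields a spectral sequence whose $E^1$-page is built from the relative homology groups $H_*(BQ_m, BQ_{m-1})$, and these relative groups are precisely what gets identified (up to the Serre class $\cS_{p\le b}$) with the Steinberg homology $H_*(\GL_m(R),\St_m)$ appearing in our tables.

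The heart of the computation is to assemble the spectral sequence contributions in total degree $5$. Working modulo $\cS_{p\le 3}$ throughout, I would read off from equations \eqref{homGL2i}, \eqref{homGL3i}, and Theorem \ref{HmGL4} (respectively \eqref{homGL2rho}, \eqref{homGL3rho}, \eqref{homGL4rho} in the Eisenstein case) the relevant Steinberg homology groups feeding into degree $5$. The key input from the low-rank end is Theorem \ref{cornerstone}, which forces $H_n(BQ_1)=0$ for $n\ge 3$ and thereby kills the contributions that would otherwise come from the rank-$1$ stratum in the degrees we care about. Combining this vanishing with the fact that $H_m(\GL_n(R),\St_n)$ is nonzero (equal to $\Z$ modulo small primes) only in a short list of degrees, I expect exactly one surviving $\Z$ to contribute to $H_5(BQ)$, with all other potential contributions either vanishing or being absorbed into the Serre class $\cS_{p\le 3}$.

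I would organize the bookkeeping by tabulating, for each stratum $m$, the shifted Steinberg groups $H_{5-?}(\GL_m(R),\St_m)$ that can land in total degree $5$, using the degree shift $H_*(BQ_m,BQ_{m-1}) \simeq_{/p\le b} H_{*-m+1}(\GL_m(R),\St_m)$ implicit in Theorem \ref{thm:voriso}. The main obstacle, and the point requiring genuine care rather than routine bookkeeping, is controlling the differentials of the spectral sequence: a priori a nonzero differential could map into or out of the surviving $\Z$ and alter the answer, or could introduce torsion at primes $\le 3$ that we are content to discard but must nonetheless verify stays within $\cS_{p\le 3}$. I would argue that on the $E^1$-page the only group contributing to degree $5$ (modulo $\cS_{p\le 3}$) is a single copy of $\Z$ with no room for incoming or outgoing differentials to a neighboring nonzero integral group, so the spectral sequence degenerates in this total degree and yields $H_5(BQ)\simeq_{/p\le 3}\Z$. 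Establishing this degeneration — pinning down that the adjacent $E^1$-entries in total degrees $4$ and $6$ cannot support a nontrivial differential hitting our $\Z$ — is where the argument must be made precise.
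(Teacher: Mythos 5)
Your overall skeleton --- filter $BQ$ by the subcategories $BQ_m$, identify the relative homology $H_n(BQ_m,BQ_{m-1})$ with Steinberg homology, feed in the tables from \S\ref{s:homology}, and stop at rank $5$ by stability --- is indeed the paper's strategy (the paper runs the rank filtration one long exact sequence \eqref{Quillenexact} at a time rather than packaging it as a spectral sequence, but that difference is cosmetic). Note first an indexing slip: the correct shift, read off from \eqref{Quillenexact}, is $H_n(BQ_m,BQ_{m-1})\cong H_{n-m}(\GL_m,\St_m)$, not $H_{n-m+1}$. With the correct shift the entries in total degree $5$ are $H_3(\GL_2,\St_2)=0$, $H_2(\GL_3,\St_3)=\Z$, $H_1(\GL_4,\St_4)=0$ (Proposition \ref{H1GL4}), and $H_0(\GL_5,\St_5)=0$ (Lee--Szczarba), all modulo $\cS_{p\le 3}$; so far this agrees with the paper.

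The gap is in your final paragraph: the claim that the adjacent entries in total degrees $4$ and $6$ leave ``no room'' for differentials is false, and ruling out the two dangerous differentials is exactly where the paper needs inputs that your proposal never mentions. First, outgoing: in total degree $4$ the rank-$2$ entry is $H_2(\GL_2,\St_2)\simeq_{/p\le 3}\Z$, so there is a possible nonzero $d^1$ out of your class $\Z=H_2(\GL_3,\St_3)$; a nonzero map $\Z\to\Z$ is injective, so such a differential would kill your class entirely. The paper excludes it (equivalently, shows the connecting map $H_2(\GL_3,\St_3)\to H_4(BQ_2)$ vanishes mod $\cS_{p\le 3}$) by invoking Staffeldt's computation \eqref{H4BQ} that $H_4(BQ_2)=H_4(BQ_3)=\Z$ mod $\cS_{p\le 3}$, which forces $H_4(BQ_2)\to H_4(BQ_3)$ to be injective mod $\cS_{p\le 3}$. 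Second, incoming: in total degree $6$ the rank-$5$ entry is $H_1(\GL_5,\St_5)$, which is computed nowhere in the paper --- the Voronoi data stop at $\GL_4$, and only $H_0(\GL_5,\St_5)=0$ is available --- so you cannot exclude an incoming differential that would replace your $\Z$ by a finite quotient. The paper sidesteps this by an argument of a completely different nature: $BQ$ is an $H$-space, so $H_*(BQ)\otimes\Q$ is the enveloping algebra of $\pi_*(BQ)\otimes\Q$, and the known values $K_0(\Z[i])=\Z$ and $K_3(\Z[i])=\Z\oplus\Z/24$ (similarly for $\Z[\rho]$) produce a product class $\pi_1\cdot\pi_4$ giving $\dim_{\Q} \bigl(H_5(BQ)\otimes\Q\bigr)\ge 1$; combined with the surjection of $\Z\simeq_{/p\le 3}H_5(BQ_4)$ onto $H_5(BQ_5)=H_5(BQ)$ mod $\cS_{p\le 3}$, this pins down $H_5(BQ)\simeq_{/p\le 3}\Z$. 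Without these two inputs, or substitutes for them, your spectral sequence does not degenerate for free and the proof is incomplete.
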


\begin{proof}
For brevity we will drop $R$ from the notation, as the argument is the same for both cases.
We will successively determine $H_5(BQ_j)$ for $j=1,\dots,5$ and then
identify the last group via stability with $H_5(BQ)$.  For this, we
will combine results from \S \ref{s:homology} with Quillen's long
exact sequence for different \tr{$j$}, given by
\begin{equation} \label{Quillenexact}
\cdots\longto H_n(BQ_{\tr{j}-1}) \longto H_n(BQ_{\tr{j}}) \longto H_{n-\tr{j}}(\GL_{\tr{j}},\St_{\tr{j}})\longto H_{n-1}(BQ_{\tr{j}-1})
 \longto\cdots  .
\end{equation}

\medskip
{\em The case $j=1$.} By Proposition \ref{cornerstone} we have  $H_n(BQ_1)=0$ for $n\geq 3 $.

\medskip
{\em The case $j=2$.} From the above sequence \eqref{Quillenexact} for $\tr{j}=2$, we get
$$\underbrace{H_5(BQ_{1})}_{=0} \longto H_5(BQ_2) \longto H_{3}(\GL_2,\St_2)\longto \underbrace{H_4(BQ_{1})}_{=0}\,,$$
whence $H_5(BQ_2) =0 \mod \cS_{p \leq 3}$ by \eqref{homGL2i} and \eqref{homGL2rho}.

\medskip
{\em The case $j=3$.} Now we invoke another result of Staffeldt, who showed  (see \cite[proof of Theorem I.1.1]{k3gauss} 
 that 
\begin{equation} \label{H4BQ}
H_4(BQ_2) =H_4(BQ_3) =\Z \mod \cS_{p \leq 3}\,.
\end{equation}

From  \eqref{Quillenexact} for $\tr{j}=3$ we get the exact sequence, working mod $\cS_{p \leq 3}$, 
$$\hskip -10pt H_5(BQ_{2}) \longto H_5(BQ_3) \longto
\underbrace{H_{2}(\GL_3,\St_3)}_{=\Z  \text{ \ (by \eqref{homGL3i}, \eqref{homGL3rho})}}
\longto \underbrace{H_4(BQ_{2})}_{=\Z \text{ \ (by \eqref{H4BQ})}}
\longto \underbrace{H_4(BQ_3)}_{=\Z \text{ \ (by \eqref{H4BQ})}}
\longto \underbrace{H_{1}(\GL_3,\St_3)}_{=0 \text{ \ (by
\eqref{homGL3i}, \eqref{homGL3rho})}}\,.$$

Since the leftmost group $H_5(BQ_{2})$ vanishes modulo $\cS_{p \leq 3}$ by the
case $j=2$, this sequence implies that $H_5(BQ_3)=\Z \mod \cS_{p \leq 3}$.

\medskip {\em The case $j=4$.} Moreover, since
$H_2(\GL_4,\St_4)=H_1(\GL_4,\St_4) =0\mod \cS_{p \leq 3}$ by Proposition
\ref{HmGL4} and Propositions \ref{H1GL4a} and \ref{H1GL4}, the sequence
\eqref{Quillenexact} for $\tr{j}=4$ gives in a similar way that
\begin{equation}\label{H5BQ4}
H_5(BQ_4)=H_5(BQ_3)=\Z \mod \cS_{p \leq 3}\,.
\end{equation}

\medskip
{\em The case $j=5$.}  This is the most complicated of all the cases
to handle.  {Note that $BQ$ is an $H$-space which implies that $H_*(BQ)\otimes \Q$ is the enveloping algebra of $\pi_*(BQ)\otimes \Q$.
It is well-known that $K_0(\Z[i])=\Z$, $K_1(\Z[i])=\Z/2$ and $K_2(\Z[i])=0$ \cite[Appendix]{bass-tate}  
as well as $K_3(\Z[i])=\Z\oplus\Z/{24}$ (given by Merkurjev--Suslin, cf.~e.g.~Weibel \cite{WeibelHandbook}, Theorem 73 in combination with Example 28), so modulo $\cS_{p \leq 3}$ we have 
$$\pi_1 (BQ )\otimes \Q = K_0(\Z[i]) \otimes \Q= \Q\,,$$
as well as  $\,\pi_2(BQ)\otimes \Q =\pi_3(BQ)\otimes \Q =0$, and 
$$\pi_4(BQ) \otimes \Q = K_3(\Z[i])\otimes \Q  = \Q\,.$$
A very similar argument works for $\Z[\rho]$.\\
Hence $H_5(BQ)\otimes \Q $ contains the product  of $\pi_1(BQ)\otimes \Q $ by $\pi_4(BQ)\otimes \Q $ and so
its dimension is at least 1. 

The stability result foreshadowed in step (iii) of \S \ref{ss:quillen}
\tr{(resulting for a Euclidean domain $\Lambda$ from
$H_0(\GL_n(\Lambda),\St_n)=0$ for $n\geq 3$, \cite[Corollary to
Theorem 4.1]{LS_Hom_Cohom})}, now implies that one has $\
H_5(BQ)=H_5(BQ_5)\,$.  By the above we get that the rank of
$H_5(BQ_5)=H_5(BQ)$ is at least 1.  }

Therefore, invoking yet again Quillen's exact sequence \eqref{Quillenexact}, this time for $j=5$, and using the 
above result that $H_5(BQ_4)$ is equal to $\Z$ modulo $\cS_{p \leq 3}$, we deduce from
$$\underbrace{H_5(BQ_{4})}_{=\Z\ \text{by }\eqref{H5BQ4}} \longto
H_5(BQ_5) \longto \underbrace{H_{0}(\GL_5,\St_5)}_{=0}$$ that
$H_5(BQ)=H_5(BQ_5)$ must be equal to $\Z$ modulo $\cS_{p \leq 3}$ as well.
Thus $H_{5} (BQ)$ cannot contain any $p$-torsion with $p>3$.
\end{proof}

\section{Relating \tpdf{$K_4(\tr{R})$}{K4(O)} and \tpdf{$H_5(BQ(\tr{R}))$}{H5(BQ(O))} via the
Hurewicz homomorphism}\label{s:hurewicz} 

It is well known that for a number ring $\,R\,$ the space $BQ( R)$ is
an infinite loop space.  Hence a theorem due to Arlettaz \cite[Theorem
1.5]{ArlettazJPAA71} shows that the kernel of the corresponding
Hure\-wicz homomorphism $K_4( R) =\pi_5(BQ)\to H_5(BQ)$ is certainly
annihilated by $144$ (cf.~Definition~1.3 in loc.cit., where this
number is denoted $R_{5}$).  Thus that kernel lies in $\cS_{p \leq 3}$
(Definition \ref{def:serreclass}).

Therefore this Hurewicz homomorphism is injective modulo $\cS_{p \leq 3}$. For
$\,R=\Z[i]$ or $\Z[\rho]$, Proposition \ref{H5BQ} implies that
$H_5(BQ)$ contains no $p$-torsion for $p>3$.  After invoking Quillen's
result that $K_{2n}({}R)$ is finitely generated and Borel's result
that the rank of $K_{2n}({}R)$ is zero for any number ring $R$ and
$n>0$, we obtain the following theorem:

\begin{thm} \label{boundedprimes}
The groups $K_{4} (\Z [i])$ and $K_{4} (\Z [\rho])$ lie in $\cS_{p
\leq 3}$.
\end{thm}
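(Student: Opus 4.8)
The plan is to compare $K_4(R) = \pi_5(BQ)$ with the homology group $H_5(BQ)$ determined in Proposition~\ref{H5BQ}, via the Hurewicz homomorphism. The first step is to observe that $K_4(R)$ is in fact finite: by Quillen's theorem each $K_{2n}(R)$ of a number ring is finitely generated, while Borel's theorem gives that the rank of $K_{2n}(R)$ vanishes for $n>0$. Thus $K_4(R)$ is finitely generated of rank $0$, hence finite, and the content of the statement is entirely about which primes can divide $|K_4(R)|$.

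Next I would bound the kernel of the Hurewicz homomorphism $h\colon K_4(R)=\pi_5(BQ)\to H_5(BQ)$. Since $BQ(R)$ is an infinite loop space, Arlettaz's theorem applies and shows that $\ker h$ is annihilated by the integer $R_5=144=2^4\cdot 3^2$. In particular every prime dividing $|\ker h|$ is $\le 3$, so $\ker h\in\cS_{p\le 3}$; equivalently, $h$ is injective modulo $\cS_{p\le 3}$.

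To control the target I would use Proposition~\ref{H5BQ}, which gives $H_5(BQ)\simeq_{/p\le 3}\Z$. Since $\Z$ is free, the defining extension splits and $H_5(BQ)\cong\Z\oplus T$ with $T=H_5(BQ)_{p\le 3}\in\cS_{p\le 3}$; in particular the torsion subgroup of $H_5(BQ)$ lies in $\cS_{p\le 3}$. As $K_4(R)$ is finite, its image $h(K_4(R))$ is a finite subgroup of $H_5(BQ)$, hence is contained in the torsion subgroup $T$, so $h(K_4(R))\in\cS_{p\le 3}$ as well.

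Finally I would assemble the short exact sequence of finite abelian groups
\[
0\longrightarrow \ker h\longrightarrow K_4(R)\longrightarrow h(K_4(R))\longrightarrow 0,
\]
whose two outer terms both lie in $\cS_{p\le 3}$. Since the Serre class $\cS_{p\le 3}$ is closed under extensions, the middle term $K_4(R)$ lies in $\cS_{p\le 3}$, which is the assertion. I do not expect this final assembly to present any genuine difficulty: essentially all the work has been front-loaded into Proposition~\ref{H5BQ}, and thus into the Voronoi computations and the sandwiching argument of \S\ref{s:vanishingandsandwiching}. Within the present theorem the only point calling for a little care is coordinating the kernel bound with the image bound, which is exactly what the displayed extension together with the extension-closure of $\cS_{p\le 3}$ accomplishes.
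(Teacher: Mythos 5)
Your proof is correct and follows essentially the same route as the paper: Arlettaz's bound $R_5=144$ on the kernel of the Hurewicz map for the infinite loop space $BQ(R)$, Proposition \ref{H5BQ} to control the target, and Quillen's finite generation plus Borel's rank-zero theorem to see that $K_4(R)$ is finite torsion. Your writeup merely makes explicit the final assembly (the splitting $H_5(BQ)\cong\Z\oplus T$ and the extension-closure of $\cS_{p\le 3}$) that the paper leaves implicit.
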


\medskip
{\bf Acknowledgments.} 
 We thank Ph.~Elbaz-Vincent for very helpful discussions.  We also thank an 
anonymous referee for suggesting numerous improvements and corrections 
to our paper.  This research was conducted
as part of a ``SQuaRE'' (Structured Quartet Research Ensemble) at the
American Institute of Mathematics in Palo Alto, California in September
2013.  It is a pleasure to thank AIM and its staff for their support,
without which our collaboration would not have been possible.

\bibliographystyle{amsplain_initials_eprint}
\bibliography{k_theory}

\end{document}